\newtheorem{theorem}{Theorem}[section]
\newtheorem{proposition}[theorem]{Proposition}
\theoremstyle{definition}
\renewcommand{\leq}{\leqslant}
\renewcommand{\geq}{\geqslant}
\def\R{\mathbb{R}}
\def\Z{\mathbb{Z}}
\def\overast{\overline{\ast}}
\newcommand{\md}[1]{\ensuremath{(\operatorname{mod}\, #1)}}
\numberwithin{equation}{section}
\begin{document}

\title[Weighted Pr\'ekopa--Leindler and quasicubes]{A Weighted Pr\'ekopa--Leindler inequality and sumsets with quasicubes}



\author[Green]{Ben Green}

\author[Matolcsi]{D\'avid Matolcsi}

\author[Ruzsa]{Imre Ruzsa}

\author[Shakan]{George Shakan}

\author[Zhelezov]{Dmitrii Zhelezov}

\thanks{BG and GS are supported by BG's Simons Investigator Grant number 376201.}

\subjclass[2000]{Primary }

\maketitle
\begin{abstract}
We give a short, self-contained proof of two key results from a paper of four of the authors. The first is a kind of weighted discrete Pr\'ekopa-Leindler inequality. This is then applied to show that if $A, B \subseteq \Z^d$ are finite sets and $U$ is a subset of a ``quasicube'' then $|A + B + U| \geq |A|^{1/2} |B|^{1/2} |U|$. This result is a key ingredient in forthcoming work of the fifth author and P\"alv\"olgyi on the sum-product phenomenon.
\end{abstract}

\section{Introduction}

\emph{Quasicubes.} The notion of a quasicube $\Sigma \subseteq \Z^d$ is defined inductively. When $d = 1$, a quasicube is simply a set of size two. For larger $d$, $\Sigma$ is a quasicube if 
\begin{enumerate} \item $\pi (\Sigma) = \{x_0, x_1\}$ is a set of size two, where $\pi : \Z^d \rightarrow \Z$ is the coordinate projection onto the final coordinate, and
\item The fibre $\Sigma_i := \Sigma \cap \pi^{-1}(x_i)$ (considered as a subset of $\Z^{d-1}$) is a quasicube.
\end{enumerate}

Thus, for instance, the usual cube $\{0,1\}^d$ is a quasicube. Another example of a quasicube with $d = 2$ is the set $\Sigma = \{(0,0), (1,0), (0,1), (1,2)\}$.

The following result is established in \cite{mrsz}. 

\begin{theorem}\label{mainthm}
Let $A, B \subseteq \Z^d$ be finite sets and suppose that $U \subseteq \Z^d$ is contained in a quasicube. Then $|A + B + U| \geq |A|^{1/2} |B|^{1/2} |U|$.
\end{theorem}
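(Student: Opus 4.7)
The plan is to argue by induction on $d$, exploiting the recursive fibration structure built into the definition of a quasicube. The base case $d=1$ is elementary: if $|U|\leq 1$ then $|A+B+U|=|A+B|\geq \max(|A|,|B|)\geq \sqrt{|A||B|}\cdot|U|$, and if $|U|=2$ one applies the sumset bound $|X+Y|\geq |X|+|Y|-1$ for finite $X,Y\subseteq\Z$ (proved by sorting) twice to obtain
\[ |A+B+U|\geq |A+B|+|U|-1 \geq (|A|+|B|-1)+1 = |A|+|B| \geq 2\sqrt{|A||B|}, \]
the last step by AM--GM.

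For the inductive step from $d-1$ to $d$, let $\pi:\Z^d\to\Z$ be the projection used in the recursive definition of $\Sigma$, so $\pi(\Sigma)=\{\xi_0,\xi_1\}$ and the fibres $\Sigma_i\subseteq\Z^{d-1}$ are quasicubes. Write $A_y,B_z\subseteq\Z^{d-1}$ for the $\pi$-fibres of $A,B$ over $y,z\in\Z$, and $U_i:=U\cap\pi^{-1}(\xi_i)\subseteq\Sigma_i$. Unpacking the definition yields the key identity
\[ (A+B+U)_w \;=\; \bigcup_{i\in\{0,1\}}\bigl[(A+B)_{w-\xi_i}+U_i\bigr], \]
where $(A+B)_v:=\bigcup_{y+z=v}(A_y+B_z)$. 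Applying the inductive hypothesis in $\Z^{d-1}$ to each triple $(A_y,B_z,U_i)$ gives $|A_y+B_z+U_i|\geq \sqrt{|A_y||B_z|}\,|U_i|$, and taking the max over $y+z=v$ upgrades this to $|(A+B)_v+U_i|\geq F(v)|U_i|$, where $F(v):=\max_{y+z=v}\sqrt{|A_y||B_z|}$.

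The remaining, and most delicate, step is to assemble the fibrewise bounds into
\[ \sum_w|(A+B+U)_w| \;\geq\; (|U_0|+|U_1|)\sqrt{|A||B|} \;=\; |U|\sqrt{|A||B|}. \]
A naive $\max$ over $i$ combined with a layer-cake argument proves the standard discrete Pr\'ekopa--Leindler estimate $\sum_v F(v)\geq \sqrt{|A||B|}$ (via Cauchy--Schwarz applied to $\{y:|A_y|>\lambda\}+\{z:|B_z|>\lambda\}\subseteq\{v:F(v)>\lambda\}$), but this yields only $\max(|U_0|,|U_1|)\sqrt{|A||B|}$, falling short of the target by up to a factor of $2$; directly summing $F(w-\xi_0)|U_0|+F(w-\xi_1)|U_1|$ overcounts the overlap of the two sheets. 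This is precisely the same obstruction as in the $d=1,|U|=2$ base case, where the bound $|A+B+U|\geq |A+B|+1$ closed the gap. I therefore expect the weighted Pr\'ekopa--Leindler inequality of the abstract to be proved via a layer-cake decomposition of $F$ at each threshold $\lambda$, invoking the $\Z$-sumset inequality $|X+\{\xi_0,\xi_1\}|\geq |X|+1$ to absorb the shift weighted appropriately by $|U_0|$ and $|U_1|$, before integrating over $\lambda$ and applying Cauchy--Schwarz to recover the factor $\sqrt{|A||B|}$.
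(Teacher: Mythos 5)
Your high-level skeleton matches the paper's: induction on $d$, fibring over the final coordinate of the quasicube, the observation that $(A+B+U)_w$ contains the sheets $(A+B)_{w-\xi_i}+U_i$, and the recognition that the crux is a weighted discrete Pr\'ekopa--Leindler inequality of the form
\[
\sum_n \max\bigl(p\,a\overast b(n),\,(1-p)\,a\overast b(n-1)\bigr)\ \geq\ \Vert a\Vert_2\Vert b\Vert_2,\qquad p=\tfrac{|U_0|}{|U|}.
\]
You also correctly diagnose why the naive layer-cake/union bound only recovers $\max(|U_0|,|U_1|)$ rather than $|U_0|+|U_1|$. However, the proposal stops at ``I therefore expect\dots'': the weighted inequality above --- which is the entire technical content of the theorem --- is never actually proved. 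A layer-cake argument invoking $|X+\{\xi_0,\xi_1\}|\geq|X|+1$ at each threshold does not obviously close the gap, because the two level sets $\{F>t/|U_0|\}$ and $\{F>t/|U_1|\}$ are nested (one is contained in the other), so the extra ``$+1$'' you gain from the sumset inequality is against the \emph{smaller} level set and does not straightforwardly convert the $\max$ into a sum. Making this precise requires a genuinely new idea, and the proposal does not supply one.

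The paper's route to this lemma is quite different and worth knowing: it \emph{derives} the discrete weighted inequality from the classical continuous one-dimensional Pr\'ekopa--Leindler inequality $\int f\overast g\geq 2\Vert f\Vert_2\Vert g\Vert_2$, by applying it to the exponentially tilted functions $f(x)=e^{\lambda\{x\}}a(\lfloor x\rfloor)$ and $g(y)=e^{\lambda\{y\}}b(\lfloor y\rfloor)$ with $\lambda=\log(1/p-1)$; a short computation of $\int f\overast g$ and $\Vert f\Vert_2\Vert g\Vert_2$ then yields Proposition~\ref{prop01}. There is also a second structural point your sketch omits: the lemma is proved only for shift $1$, whereas the quasicube gap $q=\xi_1-\xi_0$ may be arbitrary. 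The paper handles this by foliating $A$ and $B$ into residue classes mod $q$, applying the $q=1$ case after choosing the largest fibre of $A$ (resp.~$B$), and multiplying the two resulting inequalities together with the elementary bound $|A_{r_*}|^{1/2}\sum_r|A_r|^{1/2}\geq|A|$. Without either that reduction or a shift-$q$ version of the lemma, your inductive step is incomplete even granting the $q=1$ inequality. Your $d=1$ base case, by contrast, is fine (and indeed slightly more self-contained than the paper's, which folds $d=1$ into the inductive step).
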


Our aim in this note is to give a short, self-contained proof of this result.

\section{A weighted discrete Pr\'ekopa--Leindler inequality}

As in \cite{mrsz}, we deduce Theorem \ref{mainthm} from a weighted discrete Pr\'ekopa--Leindler inequality. Let $a, b : \Z \rightarrow [0,\infty)$ be compactly supported functions. We define the \emph{max-convolution}
\[ a \overast b(n) := \sup_{m \in \Z} a(n - m) b(m),\] and we write
\[ \Vert a \Vert_2 := \big( \sum_n a(n)^2 \big)^{1/2}, \quad \Vert b \Vert_2 := \big( \sum_n b(n)^2 \big)^{1/2}.\]

The following result is equivalent to \cite[Theorem 11.1]{mrsz}.

\begin{proposition}\label{prop01}
Let $a, b : \Z \rightarrow [0,\infty)$ be compactly supported functions and let $p \in [0,1]$. Then we have
\[ \sum_n \max(p a \overast b(n), (1 - p) a \overast b (n-1)) \geq \Vert a \Vert_2 \Vert b \Vert_2.\]
\end{proposition}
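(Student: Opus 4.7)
\emph{Plan.} The strategy is to reduce to the indicator case and then extend via a layer-cake / superlevel-set argument.

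\textbf{Step 1 (Indicator case).} For $a = \mathbf{1}_A$, $b = \mathbf{1}_B$ with $A, B \subseteq \Z$ finite nonempty, we have $a \overast b = \mathbf{1}_{A+B}$. Partitioning $\Z$ by membership in $(A+B) \cap (A+B+1)$, $(A+B) \setminus (A+B+1)$, and $(A+B+1) \setminus (A+B)$ --- the latter two having common cardinality $r = r(A+B)$, the number of maximal runs of consecutive integers in $A+B$ --- gives the exact formula
\[
\sum_n T_p(\mathbf{1}_{A+B})(n) \;=\; M \cdot |A+B| \;+\; m \cdot r,
\]
where $T_p f(n) := \max(p f(n), (1-p) f(n-1))$, $M := \max(p, 1-p)$, and $m := \min(p, 1-p)$. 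Using the elementary sumset bound $|A+B| \geq |A| + |B| - 1$ (valid for finite $A, B \subseteq \Z$) together with $r \geq 1$, a short calculation --- reducing to $(2M-1)(\sqrt{|A||B|} - 1) \geq 0$, which holds since $M \geq 1/2$ and $|A||B| \geq 1$ --- yields $\sum_n T_p(\mathbf{1}_{A+B})(n) \geq \sqrt{|A||B|} = \|\mathbf{1}_A\|_2 \|\mathbf{1}_B\|_2$.

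\textbf{Step 2 (General case).} For general nonnegative $a, b$ with compact support, set $A_s := \{a > s\}$, $B_t := \{b > t\}$, and $F_c := \{a \overast b > c\}$. The containment $F_{st} \supseteq A_s + B_t$ combined with layer cake expresses
\[
\sum_n T_p(a \overast b)(n) \;=\; \int_0^\infty |F_{c/p} \cup (F_{c/(1-p)} + 1)|\, dc.
\]
For each $c$ and each $s > 0$, I would bound the superlevel set from below by $|A_s + (B_{c/(ps)} \cup (B_{c/((1-p)s)} + 1))|$, apply the sumset bound $|X+Y| \geq |X|+|Y|-1$ in $\Z$ together with the Step-1 identity for the bracketed union, and integrate against an optimal choice of $s = s(c)$. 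Using the representations $\|a\|_2^2 = 2\int_0^\infty s|A_s|\,ds$ and $\|b\|_2^2 = 2\int_0^\infty t|B_t|\,dt$, a change of variable $c = st$ together with Cauchy--Schwarz should recover the product $\|a\|_2 \|b\|_2$.

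\textbf{Main obstacle.} The principal difficulty is Step 2: since $T_p(a \overast b)(n) = \sup_{s,t} st \cdot T_p(\mathbf{1}_{A_s + B_t})(n)$ is a supremum over $(s, t)$, any naive single-level argument yields only $\sup_s s\sqrt{|A_s|} \cdot \sup_t t\sqrt{|B_t|}$, which is bounded above by $\|a\|_2 \|b\|_2$ but typically strictly so. Closing this gap requires genuinely exploiting both the shift-by-$1$ structure in the definition of $T_p$ and the asymmetry between the levels $c/p$ and $c/(1-p)$: one must match the optimization parameters across the two superlevel sets so that the geometric-mean factor $\sqrt{|A_s||B_t|}$ emerges inside the integration rather than outside as a single supremum.
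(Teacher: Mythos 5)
Your Step 1 is correct: sorting each $n$ by whether $n$ and $n-1$ lie in $A+B$ gives the exact identity
\[
\sum_n \max\bigl(p\,\mathbf{1}_{A+B}(n),\,(1-p)\,\mathbf{1}_{A+B}(n-1)\bigr) \;=\; M|A+B| + m\,r,
\]
where $M = \max(p,1-p)$, $m = \min(p,1-p)$ and $r$ is the number of maximal runs, and since $M+m=1$ the bounds $|A+B|\geq|A|+|B|-1$, $r\geq1$ and AM--GM reduce the claim to $(2M-1)(\sqrt{|A||B|}-1)\geq0$. This is a nice, elementary, self-contained proof for indicator functions, and it does not appear in the paper. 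Step 2, however, is a genuine gap, which you flag yourself. The layer-cake identity is right, but after bounding
\[
|F_{c/p}\cup(F_{c/(1-p)}+1)| \;\geq\; |A_s| + |B_{c/(ps)}\cup(B_{c/((1-p)s)}+1)| - 1,
\]
you have an \emph{additive} lower bound depending on a free per-level parameter $s=s(c)$, and no choice of $s(c)$ followed by Cauchy--Schwarz recovers the \emph{multiplicative} quantity $\Vert a\Vert_2\Vert b\Vert_2$ in the way you propose. Your observation that a single-level argument yields only $\sup_s s\sqrt{|A_s|}\cdot\sup_t t\sqrt{|B_t|}$ --- typically strictly smaller than $\Vert a\Vert_2\Vert b\Vert_2$ --- correctly identifies the crux: this is exactly where classical proofs of the one-dimensional Pr\'ekopa--Leindler inequality require a separate idea (normalization plus AM--GM, or the Brascamp--Lieb transport substitution), neither of which discretizes along the lines of your sketch, since the $-1$ from the discrete sumset bound must be absorbed by the shift-by-one term.

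The paper avoids all of this by arguing in the opposite direction: it \emph{deduces} Proposition~\ref{prop01} from the known continuous one-dimensional Pr\'ekopa--Leindler inequality. Setting $\lambda := \log(\frac1p - 1)$ and applying the continuous inequality to $f(x) := e^{\lambda\{x\}}a(\lfloor x\rfloor)$ and $g(y) := e^{\lambda\{y\}}b(\lfloor y\rfloor)$, one checks that integrating $f\overast g$ over each unit interval $[n,n+1)$ and computing $\Vert f\Vert_2$, $\Vert g\Vert_2$ explicitly produces the weighted discrete statement, with the exponential in the fractional part encoding the weight $p$. If you wish to rescue your route you would, in effect, first need to prove the continuous inequality anyway, at which point the paper's short reduction is the efficient way to conclude.
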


In the case $p = \frac{1}{2}$, this is (2.4) in the paper of Pr\'ekopa \cite{pre}, where it is used to establish the 1-dimensional case of what is now known as the Pr\'ekopa--Leindler inequality (we will recall the statement of this below). We will proceed in the opposite direction, deducing Proposition \ref{prop01} from Pr\'ekopa--Leindler.

Suppose that $f, g : \R \rightarrow [0, \infty)$ are compactly supported, piecewise continuous functions. Then the (1-dimensional) Pr\'ekopa--Leindler inequality states that
\begin{equation}\label{pl}  \int f \overast g \geq 2 \Vert f \Vert_2 \Vert g \Vert_2, \end{equation} where the max-convolution is defined by
\[ f \overast g(x) := \sup_{y \in \R} f(x - y)g(y),\] and the norms are the usual Lebesgue norms
\[ \Vert f \Vert_2 := \big( \int f^2 \big)^{1/2} , \quad \Vert g \Vert_2 := \big( \int g^2 \big)^{1/2}.\]
(It should always be clear from context whether we are applying $\overast$ or $\Vert \cdot \Vert_2$ with functions on $\Z$ or functions on $\R$). We note that Brascamp and Lieb \cite{brascamp-lieb} found a much shorter proof of \eqref{pl} than the original (see also this survey of Gardner \cite{Ga}).

\begin{proof}[Proof of Proposition \ref{prop01}] By continuity we may assume that $p \in (0,1)$. Set $\lambda := \log(\frac{1}{p} - 1)$. 
Apply \eqref{pl} with functions $f, g$ defined by
\[ f(x) := e^{\lambda \{x \}} a (\lfloor x\rfloor), \quad g(y) := e^{\lambda \{y\}} b(\lfloor y \rfloor).\]

Let $n \in \Z$ and $0 \leq t < 1$. Suppose that $x + y = n + t$. Then, since $x - 1 < \lfloor x\rfloor \leq x$, we have $n-2 < \lfloor x\rfloor + \lfloor y \rfloor < n+1$, or in other words $\lfloor x\rfloor + \lfloor y \rfloor = n-1$ or $n$. If $\lfloor x \rfloor + \lfloor y \rfloor = n - 1$ then 
\[ f(x) g(y) \leq e^{\lambda(t + 1)} a \overast b (n-1),\] whilst if $\lfloor x \rfloor + \lfloor y \rfloor = n$ then 
\[ f(x) g(y) \leq e^{\lambda t} a \overast b (n).\]
Therefore
\[ f \overast g (n + t) \leq e^{\lambda t} \max(a \overline{\ast} b (n), e^{\lambda} a \overline{\ast} b(n - 1)).\]
Integrating over $t \in [0,1)$ and then summing over $n \in \Z$ yields
\begin{equation}\label{star-1} \int f \overast g \leq \frac{e^{\lambda} - 1}{\lambda} \sum_n \max(a \overline{\ast} b (n), e^{\lambda} a \overline{\ast} b(n - 1)).\end{equation}
On the other hand,
\[ \Vert f \Vert_2^2 = \frac{e^{2\lambda} - 1}{2\lambda} \Vert a \Vert_2^2, \quad \Vert g \Vert_2^2 = \frac{e^{2\lambda} - 1}{2\lambda} \Vert b \Vert_2^2.\] Substituting into \eqref{pl} gives
\[ \sum_n \max(a \overline{\ast} b (n), e^{\lambda} a \overline{\ast} b(n - 1)) \geq (e^{\lambda} + 1) \Vert a \Vert_2 \Vert b \Vert_2.\] Recalling the choice of $\lambda$ (thus $p = \frac{1}{e^{\lambda} + 1}$), the proposition follows.\end{proof}

\section{Proof of the main theorem}

The arguments of this section are all in \cite{mrsz}, but there they form part of a more general framework. Here we provide a self-contained account tailored to the specific purpose of proving Theorem \ref{mainthm}.

\begin{proof}[Proof of Theorem \ref{mainthm}]

We proceed by induction on $d$. The proof of the inductive step also proves the base case $d = 1$.

Suppose that $U$ is contained in a quasicube $\Sigma \subset \Z^d$. Suppose that $\pi(\Sigma) = \{x_0, x_1\}$, where $\pi : \Z^d \rightarrow \Z$ is projection onto the last coordinate. Since the inequality is translation-invariant, we may assume that $x_0 = 0$ and $x_1 = q > 0$. Suppose first that $q = 1$.

Let $A_i := A \cap \pi^{-1}(n)$ be the fibre of $A$ above $n$, and similarly for $B$. The set $U$ has just two fibres $U_0, U_1$ and, by the definition of quasicubes, they are both contained in quasicubes of dimension $d-1$.

Observe that the fibre of $A + B + U$ above $n$ contains $A_x + B_y + U_0$ whenever $x + y = n$, and $A_x + B_y + U_1$ whenever $x + y = n-1$. By induction,
\[ |A_x + B_y + U_0| \geq |A_x|^{1/2}|B_y|^{1/2}|U_0|,\]
\[ |A_x + B_y + U_1| \geq |A_x|^{1/2}|B_y|^{1/2}|U_1|,\] and so the fibre $(A + B + U)_n$ of $A + B + U$ above $n$ has size at least
\[ \max\big(|U_0|\max_{x + y = n} |A_x|^{1/2}|B_y|^{1/2} , |U_1| \max_{x + y = n-1} |A_x|^{1/2} |B_y|^{1/2}    \big).\]
This is equal to 
\[ |U| \max \big( p a \overast b(n) + (1 - p) a \overast b(n-1)\big),\]
where $p := |U_0|/|U|$, $a(x) := |A_x|^{1/2}$ and $b(y) := |B_y|^{1/2}$.
Summing over $n$ and applying Proposition \ref{prop01} we obtain
\begin{align*}
|A + B + U| & = \sum_n |(A + B + U)_n| \\ & \geq |U| \sum_n \max \big( p a \overast b(n) + (1 - p) a \overast b(n-1)\big) \\ & \geq |U| \Vert a \Vert_2 \Vert b \Vert_2  = |U| |A|^{1/2} |B|^{1/2}. 
\end{align*}
This proves the result when $q = 1$. Suppose now that $q$ is arbitrary, and foliate $A = \bigcup_{r \in \Z/q\Z} A_r$, $B = \bigcup_{s \in \Z/q\Z} B_s$, where $A_r := \{ a \in A: \pi(a) \equiv r \md{q}\}$ and similarly for $B_s$. Let $r_*$ be such that $|A_r| \leq |A_{r_*}|$ for all $r$, and $s_*$ be such that $|B_s| \leq |B_{s_*}|$ for all $s$. 

The sets $A_{r_*} + B_s + U$ are disjoint as $s$ varies, and so by the case $q = 1$ (rescaled) we have
\begin{equation}\label{eq1} |A + B + U| \geq \sum_s |A_{r_*} + B_s + U| \geq |U| |A_{r_*}|^{1/2} \sum_s |B_s|^{1/2}.\end{equation}
Similarly, 
\begin{equation}\label{eq2} |A + B + U| \geq |U| |B_{s_*}|^{1/2}\sum_r |A_r|^{1/2}.\end{equation}
Taking products of \eqref{eq1}, \eqref{eq2} and using 
\[ |A_{r_*}|^{1/2} \sum_r |A_r|^{1/2} \geq \sum_r |A_r| = |A|,\]
\[ |B_{s_*}|^{1/2} \sum_s |B_s|^{1/2} \geq \sum_s |B_s| = |B|,\] the result follows.
\end{proof}

\end{document}